\documentclass[12pt]{amsart}

\usepackage{amsmath}
\usepackage{amssymb}
\usepackage{amsfonts}
\usepackage{amsthm}
\usepackage{verbatim}
\usepackage{amscd}
\usepackage{cite}
\usepackage{leftidx}
\usepackage{enumerate}
\usepackage{txfonts}
\usepackage{mathpazo}
\usepackage[mathscr]{eucal}
\usepackage{hyperref}
\oddsidemargin -.3in\textwidth 6.2in\evensidemargin-.3in

\hfuzz5pt 


 \DeclareMathOperator{\cw}{cw}
 \DeclareMathOperator{\dsc}{Dsc}
 \DeclareMathOperator{\syt}{syt}

\def\inv{^{-1}}
\def\that{\hat T}
\def\ainfty{A\langle\infty\rangle}

\DeclareMathOperator{\mgbar}{\overline\M_g}
\DeclareMathOperator{\del}{\partial}

\def\refp #1.{(\ref{#1})}

\newcommand{\A}{\mathcal{A}}

\newcommand{\M}{\mathcal{M}}

\newcommand{\oneover}[1]{\frac{1}{#1}}

\newcommand{\Cal}[1]{\mathcal #1}

\def\sbr #1.{^{[#1]}}
\def\sfl #1.{^{\lfloor #1\rfloor}}

\newcommand{\sbp}[1]{_{(#1)}}

\newcommand\subp [1] {_{(#1)}}

\def\inv{^{-1}}
\def\?{{\bf{??}}}

\def\M{\Cal M}
\def\A{\Bbb A}

\def\C{\mathbb C}
\def\P{\mathbb P}
\def\N{\mathbb N}
\def\R{\mathbb R}

\def\Spec{\text{\rm Spec} }

\def\Q{\mathbb Q}

\def\O{\mathcal O}

\def\g{\mathfrak g}

\def\1/2{\frac{1}{2}}

\def\I{\mathcal{ I}}

\def\2{{[2]}}
\def\l{\ell}
\def\nl{\newline}

\def\<{\langle}
\def\>{\rangle}

\def\2{{[2]}}
\def\l{\ell}

\def\scl #1.{^{\lceil#1\rceil}}
\def\spr #1.{^{(#1)}}
\def\sbc #1.{^{\{#1\}}}

\def\subpr#1.{_{(#1)}}

\def\beq{\begin{equation*}}
\def\eeq{\end{equation*}}

\def\g3{{\Gamma\spr 3.}}

\newcommand{\beql}[2]{\begin{equation}\label{#1}#2\end{equation}}

\newcommand{\eqspl}[2]{
\begin{equation}\label{#1}
\begin{split}
#2\end{split}\end{equation}}
\newcommand{\eqsp}[1]{\begin{equation*}
\begin{split}#1\end{split}\end{equation*}}

\newcommand{\beginalphaenum}{
\begin{enumerate}\renewcommand{\labelenumi}{ }
\item \begin{enumerate}
}

\def\eex{\end{rm}\end{example}}
\newcommand\newsection[1]{\section{#1}\setcounter{equation}{0}
}

\newcommand{\be}{\mathbb E}

\newtheorem{thm}{Theorem}[section]

\newtheorem*{thm*}{Theorem}
\newtheorem{cor}[thm]{Corollary}
\newtheorem*{cor*}{Corollary}

\newtheorem{lem}[thm]{Lemma}

\newtheorem*{claim*}{Claim}
\newtheorem{prop}[thm]{Proposition}

\theoremstyle{remark}

\newtheorem{rem}[thm]{Remark}

\newtheorem{example}[thm]{Example}
\newtheorem*{example*}{Example}

\pagestyle{plain}



\begin{document}
\title{ \Large Differential equations in Hilbert-Mumford Calculus
}

\normalsize
\author 
{Ziv Ran}
\date {\today}
\address {Math Dept.  UC Riverside\nl
Surge Facility,  Big Springs Road,\nl
Riverside CA 92521}
\email {ziv.ran @ucr.edu} \subjclass{14N99,
14H99}\keywords{Hilbert scheme, nodal curves, intersection theory}
\begin{abstract}
An evolution-type differential equation encodes the intersection theory of tautological classes on the Hilbert scheme of a family of nodal curves.
\end{abstract}
\maketitle
\section*{Introduction}
Let $X/B$ be a family of nodal or smooth curves and $L$ a line bundle on $X$. Let $X\sbr m._B$ denote
the relative Hilbert scheme of length-$m$ subschemes of fibres of $X/B $, and $\Lambda_m(L)$ the tautological
bundle associated to $L$, which is a rank-$m$ bundle on $X\sbr m._B$.
The term 'Hilbert-Mumford Calculus' refers to the intersection calculus of 'tautological classes',
i.e. polynomials in the Chern classes of  $\Lambda_m(L)$. This calculus, which is an extension of the classical work of Macdonald
\cite{macd},  was developed in \cite{geonodal}, \cite{internodal} and
other papers, where a number of examples and computations were given, with
the more involved ones mostly based
on the Macnodal computer program developed for this purpose by Gwoho Liu \cite{macnodal}.
 Our purpose here is to show
that this calculus can be encoded in a linear second-order partial differential equation satisfied by a suitable generating function
(see \eqref{evolution}, \eqref{evolution-bis} below). While the result is, in a sense, just
a reformulation of results in \cite{internodal}, the advantages of the reformulation are that
it uses the standard language of differential calculus and moreover avoids the recursiveness inherent in
\cite{internodal}.\par
In more detail, let $W^m(X/B)$  denote the Hilbert scheme of length-$m$ flags in fibres and
consider the  infinite-flag Hilbert scheme \[
W(X/B)=\varprojlim W^m(X/B)\subset\prod_m X\sbr m._B\]
which is endowed with discriminant or big diagonal operators $\Gamma\spr m.$ pulled back from
$X\sbr m._B$ and with classes $L_i$ pulled back from the $i$-th $X$ factor.
It was shown in \cite{geonodal} that the Chern numbers of the tautological bundles can be expressed as linear
combinations of monomials of the form (working left to right)
\[L_1^{a_1}L_2^{a_2}(\Gamma\spr 2.)^{k_2}...L_r^{a_r}(\Gamma\spr r.)^{k_r}.\]
 Consequently we introduce the
'Hilbert potential'
\[G=\exp(\gamma\Gamma)\exp_\star(\sum\mu_i{L^i})\]
in which  $\star$ is external or 'Pontrjagin' product (whereas the 'implicit' or '.' product is intersection
or, in the case of an operator like $\Gamma$, composition).
Then the intersection calculus of \cite{internodal} shows how to express $G$
recursively in terms
of elements of the so-called tautological module $T=T(X/B)$,
and consequently how to read off numerical information.
We show in Theorem \ref{evo-thm} how to encode the latter into
an equation in  the $\gamma$- and $\mu_i$-derivatives of $G$ and its derivatives with respect
to the 'space' variables corresponding to standard generators of $T$. This
equation can be used to completely determine $G$.\par
In order to be able to express the appropriate relation in a familiar differential equation  form, we introduce a formal model $\that$ for the tautological
module $T$, essentially by replacing suitable generators by independent variables.\par
The use of differential equations to describe intersection theory associated to stable curves is not new.
Our evolution equation is somewhat  analogous to the 'quantum differential equation'
 of Gromov-Witten theory
(see \cite{cox-katz}, Ch. 10 or  \cite{mirror-book}, Ch. 28). Another well-known such equation is
Witten's KdV equation, governing the intersection theory of
the moduli space $\mgbar$ (see \cite{Witten}).
It would be interesting to find more direct connections.

\section{Big tautologocal module}

\subsection{Data}
We will fix a flat family $X/B$ of nodal, possibly pointed, genus-$g$ curves,
which is 'split' in the sense that its boundary can be covered by finitely many
projective families
of the form $X^\theta/B(\theta)\to X/B$, each endowed with a pair of distinguished sections $\theta_x, \theta_y$
called node preimages, that map to a node $\theta$ of $X/B$. We then have $i$-th
 boundary families
$X_i/B_i$ where \[B_i=\coprod\limits_{(\theta_1,...,\theta_i)}B(\theta_1,...,\theta_i)=
\coprod\limits_{(\theta_1,...,\theta_i)}B(\theta_1)\times_B...\times_BB(\theta_i)\] (union
over collections of $i$ distinct nodes). This includes the case $i=0$
where $B_0=B$.
To this we associate a \emph{coefficient system}, in the form of a system of pairs of graded unital $\Q$-algebras
\[(A_{B_i}\to A_i)=\bigoplus (A_{B(\theta_1,...,\theta_i)}\to A_{(\theta_1,...,\theta_i)})\]
such that\begin{enumerate}\item
$(A_{B_i}\to A_i)$ admits a map to $(H^*(B_i,\Q)\to H^*(X_i,\Q))$;\par
\item Each  $A=A_i$ contains an element $ \omega_i$ that maps to $c_1(\omega_{X_i/B_i})$, plus
elements that map to the distinguished sections, and each $A_{B_i}$ contains
elements mapping to Mumford classes and cotangent classes for the distinguished
sections (both those coming from $X/B$ and node preimages). There are all compatible, e.g.
\[\omega_i|_{X^{\theta_1,...,\theta_i}}=\omega+\sum\limits_{j=1}^i
(\theta_{j,x}+\theta_{j,y}).\]
\item For any distinguished section $\sigma$ over $B_i$,
there is a pullback map $\sigma^*:A_i\to A_{B_i}$.\par
\item  There are 'pullback' maps $(A_{B_i}\to A_i)\to (A_{B_{i+1}}\to A_{i+1})$ compatible with the various data.
An element $\alpha\in A_i$ may be replaced by its image in $A_j, j>i$, whenever this makes sense.
\end{enumerate}
\subsection{Generators, $\star$ product}
In \cite{internodal} we defined the tautotological module
\[T=T_A(X/B)=\bigoplus T^m_A(X/B).\]
This is graded by the weight $m$ which is the 'number of variables', i.e there is a canonical,
not necessarily injective, map
to the rational equivalence group
\[T^m\to A^\bullet_\Q(X\sbr m._B).\]
$T$ contains a 'classical' part $T_0$, which is a commutative algebra under
external or Pontrjagin product (as distinct from intersection product), which will be denoted by $\star$. Via the correspondence
\eqspl{}{
\begin{matrix}
&&W^{m+m'}(X/B)&\to&X\sbr m+m'._B\\
&\swarrow&&\searrow&\\
X\sbr m._B&&&&X\spr m'._B
\end{matrix}
}
$T$ is a module over $T_0$.
A special role will be played by the diagonal classes of $T_0$: the monoblock diagonals
\[\Gamma\sbp{n}[\alpha], \alpha\in A\]
and their $\star$- products, called polyblock diagonals.
In fact, if we introduce a formal variable $t_n, n\geq 1$,  we have a ring isomorphism
\[T_0\simeq A_B[t_nA: n\in\N ].\]
More concretely, $T_0$ is a direct sum of tensor products of symmetric powers of $A$ over $A_B$, indexed
by partitions.


%
In addition to polyblock diagonals, the tautological module also contains (iterated) node scrolls and node sections, of the form
\[F^n_j(\theta)[\gamma], Q^n_j(\theta)[\gamma], \gamma\in T_{A_\theta}(X^\theta/B(\theta))\]
(and their iterations).
Thus, elements of the tautological module of $X/B$ arise from analogous elements for
a boundary family $X^\theta/B(\theta)$ via a node scroll $F^n_J(\theta)$ or a node section $Q^n_j(\theta)$.
 To describe iterated node/scroll sections systematically,  let
$\theta_F, \theta_Q$ be mutually disjoint vectors of distinct nodes of $X/B$ of respective
dimensions $b_F, b_Q$, and
let $j_F, n_F, j_Q, n_Q$ be vectors  of natural numbers, indexed commonly with $\theta_F, \theta_Q$,
respectively.
Then we get
iterated node classes
\eqspl{fq-monomial}
{F^{n_F}_{j_F}(\theta_F)Q^{n_Q}_{j_Q}(\theta_Q)&[\prod_\star\Gamma\sbp{m_i}[\alpha_i]]\\=
...&F^{n_{F,i}}_{j_{F,i}}(\theta_{F,i})...Q^{n_{Q,i}}_{j_{Q,i}}(\theta_{Q,i})...[\prod_\star\Gamma\sbp{m_i}[\alpha_i]]\in
 T^{|m.|+b_F+b_Q}
}
Thus via $F^{n_F}_{j_F}(\theta_F)Q^{n_Q}_{j_Q}(\theta_Q)[*]$, we get a map
\[T_0(X^{(\theta_F\coprod \theta_Q)}/B(\theta_F\coprod\theta_Q))\to T(X/B).\]
$F^n_j$ and $Q^n_j$ are trivial unless $1\leq j<n$.
$F^{n_F}_{j_F}(\theta_F)Q^{n_Q}_{j_Q}(\theta_Q)[\Gamma^R]$  is the class of the \emph{closure} of the
part of  locus of type
$F^{R_F}_{j_F}(\theta_F)Q^{R_Q}_{j_Q}(\theta_Q)\star\Gamma^R$ where the points in the factor
corresponding to $\Gamma^R$ are in the smooth part of $X/B$. It coincides with the class of the latter
locus if either $R=0$ or $R_Q=0$, but differs from it otherwise. For example, the transfer formula of \cite{internodal}
reads, with this notation
\eqspl{star-gamma-1}
{F^n_j(\theta)\star\Gamma\sbp{1}[\alpha]=F^n_j(\theta)[\Gamma\sbp{1}[\alpha]],\\
Q^n_j(\theta)\star\Gamma\sbp{1}=Q^n_j(\theta)[\Gamma\sbp{1}[\alpha]]+\theta^*(\alpha)F^{n+1}_j(\theta)
}
In fact, a similar reasoning shows easily that
\[F^n_j(\theta)\star\Gamma\sbp{m}=F^n_j(\theta)[\Gamma\sbp{m}],\forall m\geq 1,\]
hence in fact
\eqspl{}{F^n_j(\theta)\star\prod_\star\Gamma\sbp{m_i}[\alpha_i]=F^n_j(\theta)[\prod_\star\Gamma\sbp{m_i}[\alpha_i]],}
The case of $Q^n_j$ is more involved: the relationship is the following
\begin{lem}\label{r-numbers-lem}
Define rational numbers $r(n,j)^k_\l$ for $0<j<n$ by
\eqspl{r(n,j)-eq}{
r(n,j)^j_{n+1}&=1;\\
r(n,j)^k_\l&=\frac{1}{\l-1}((\l-k)r^k_{\l -1}+kr(n,j)^{k-1}_{\l -1}), \l >n+1;\\
r(n,j)^*_*&=0, \mathrm{otherwise}.
}
and set
\eqspl{f(n,j)-eq}{
F(n,j,\theta,m,s)=\theta_x^*(s|_{A_\theta})\sum\limits_k r(n,j)^k_{n+m}F^{n+m}_k(\theta), s\in A
}
Then
\eqspl{}{
Q^n_j(\theta)\star\Gamma\sbp{m}[s]=Q^n_j(\theta)[\Gamma\sbp{m}[s]]+F(n,j,\theta,m,s)
}
\end{lem}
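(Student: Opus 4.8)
The plan is to induct on $m$, equivalently on the level $\l=n+m$ at which \eqref{r(n,j)-eq} is evaluated. The base case $m=1$ is exactly the transfer formula \eqref{star-gamma-1}: by definition $r(n,j)^j_{n+1}=1$ while $r(n,j)^k_{n+1}=0$ for $k\ne j$, so $F(n,j,\theta,1,s)=\theta_x^*(s|_{A_\theta})F^{n+1}_j(\theta)$, which is the correction term $\theta^*(\alpha)F^{n+1}_j(\theta)$ of \eqref{star-gamma-1} under the identification $\theta^*=\theta_x^*(\,\cdot\,|_{A_\theta})$.

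For the inductive step I would invoke the reduction machinery of \cite{internodal} to express the weight-$m$ monoblock diagonal $\Gamma\sbp{m}$ through the weight-$(m-1)$ diagonal together with the incidence condition bringing one further point of the cluster onto the node. The component in which the length-$m$ cluster stays in the smooth locus reproduces the main term $Q^n_j(\theta)[\Gamma\sbp{m}[s]]$ and propagates unchanged, while the class $s$ restricts to $\theta_x^*(s|_{A_\theta})$ at the moment the cluster first meets the node, after which the remaining point-additions are undecorated; this is why the $r$-numbers depend only on $(n,j,k,\l)$ and not on $s$ or $\theta$. The remaining, purely local, computation takes place on the node scroll: when the incoming point degenerates onto a configuration of type $F^{\l-1}_k(\theta)$, the $\mathbb P^1$-bundle structure distributes it across the two branches of $\theta$, producing $F^\l_k(\theta)$ with weight $\tfrac{\l-k}{\l-1}$ and $F^\l_{k+1}(\theta)$ with weight $\tfrac{k}{\l-1}$. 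Collecting the coefficient of $F^\l_k(\theta)$ then reproduces precisely $r(n,j)^k_\l=\tfrac{1}{\l-1}\big((\l-k)r(n,j)^k_{\l-1}+k\,r(n,j)^{k-1}_{\l-1}\big)$, closing the induction.

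Two consistency checks should be recorded. First, the support of $r$ stays within the range of genuine scrolls: beginning at $k=j$, $\l=n+1$ and raising $k$ by at most one at each of the $m-1$ steps gives $j\le k\le j+m-1$, and since $j<n$ this forces $1\le k\le n+m-2<\l$, so every nonzero $r(n,j)^k_\l$ multiplies a nontrivial $F^\l_k(\theta)$ (recall $F^\l_k$ is trivial unless $1\le k<\l$). Second, one can verify from the recursion, by a one-line telescoping, the aggregate normalization $\sum_k r(n,j)^k_\l=\tfrac{\l(\l+1)}{(n+1)(n+2)}$, which is reassuringly independent of $j$.

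The main obstacle is the local node-scroll computation producing the branching weights $\tfrac{\l-k}{\l-1}$ and $\tfrac{k}{\l-1}$. This needs the explicit excess-intersection geometry of $\Gamma\sbp{1}$ along a node scroll $F^{\l-1}_k(\theta)$, i.e. control of the tangent/normal data of the $\mathbb P^1$-bundle onto which the cluster degenerates; one must confirm both that only the neighbouring indices $k$ and $k+1$ occur (no larger jump) and that the denominator $\l-1$ records the fibre dimension at level $\l-1$ rather than a naive binomial factor. Equally delicate is the justification that, in the presence of the node section $Q^n_j(\theta)$, the weight-$m$ diagonal may legitimately be peeled one point at a time, so that the reduction identity for $\Gamma\sbp{m}$ interacts with the node structure exactly as in the smooth case apart from the single localized restriction of $s$.
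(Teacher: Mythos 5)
Your argument coincides with the paper's: the base case $m=1$ is the transfer formula \eqref{star-gamma-1}, and the inductive step is exactly what the paper calls applying punctual transfer $m-1$ times, with the branching weights $\tfrac{\l-k}{\l-1}$, $\tfrac{k}{\l-1}$ supplied by \cite{internodal}, Prop.\ 3.19 (the "obstacle" you flag is precisely the content of that cited result, not something the paper reproves). Your consistency checks, including $\sum_k r(n,j)^k_\l=\tfrac{\l(\l+1)}{(n+1)(n+2)}$, are correct and a useful addition.
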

\begin{proof}
The case $m=1$ is just \eqref{star-gamma-1}. The general case is obtained by applying punctual transfer
 (cf. \cite{internodal}, \S 3.3)
$m-1$ times to the result of $\star\Gamma\sbp{1}[s]$, using \cite{internodal}, Prop. 3.19.
\end{proof}
\begin{rem}\label{r-numbers-remark}
Because $\theta_x, \theta_y$ both map to $\theta$, we have $\theta_x^*=\theta_y$.
Therefore we may denote both by $\theta^*$ and write the map $s\mapsto F(n,j,\theta, m,s)$ as
\[F(n,j,\theta, m)=\theta^*\sum\limits_k r(n,j)^k_{n+m}F^{n+m}_k(\theta).\]
Also, $\theta_x^*(\omega)=0$ (by residues), $\theta_x^*(1)=1$ (trivially).\qed
\end{rem}
The same argument  shows the following more general statement

\begin{prop}\label{star-[]-eq}
We have, with the above notations,
\eqspl{}{
&F^{n_F}_{j_F}(\theta_F)Q^{n_Q}_{j_Q}(\theta_Q)\star\Gamma\sbp{m_1}[s_1]\star...\star\Gamma\sbp{m_r}[s_r]=\\
&\ \
F^{n_F}_{j_F}(\theta_F)Q^{n_Q}_{j_Q}(\theta_Q)[\Gamma\sbp{m_1}[s_1]\star...\star\Gamma\sbp{m_r}[s_r]]\\
&+\sum\limits_{i,j}
F^{n_F}_{j_F}(\theta_F)F(n_{Q,i}, j_{Q,i}, \theta_{Q,i},m_j,s_j)
Q^{n_Q\setminus n_{Q,i}}_{j_{Q}\setminus j_{Q,i}}(\theta_Q\setminus \theta_{Q,i})
[\Gamma\sbp{m_1}[s_1]\star...\widehat{\Gamma\sbp{m_j}[s_j]}...\Gamma\sbp{m_r}]
}\qed
\end{prop}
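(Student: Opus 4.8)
The plan is to reduce the statement to repeated application of the single-factor transfer of Lemma \ref{r-numbers-lem}, organizing the computation as an induction on the number $r$ of polyblock factors. At each step I would peel off one factor, say $\Gamma\sbp{m_r}[s_r]$, using associativity of the $T_0$-module structure, and analyze the single product $\bigl(F^{n_F}_{j_F}(\theta_F)Q^{n_Q}_{j_Q}(\theta_Q)\bigr)\star\Gamma\sbp{m_r}[s_r]$. The first observation is that the node-scroll factors contribute no corrections: by the absorption identity $F^n_j(\theta)\star\prod_\star\Gamma\sbp{m_i}[\alpha_i]=F^n_j(\theta)[\prod_\star\Gamma\sbp{m_i}[\alpha_i]]$ recorded just before the lemma, each factor $F^{n_F}_{j_F}(\theta_F)$ commutes past the incoming block and pulls it cleanly inside the bracket. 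Hence only the node-section factors $Q^{n_{Q,i}}_{j_{Q,i}}(\theta_{Q,i})$ can generate correction terms.

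For the single incoming block I would run exactly the argument of Lemma \ref{r-numbers-lem}: the case $m_r=1$ is the transfer formula \eqref{star-gamma-1}, and the general case follows by applying punctual transfer (\cite{internodal}, \S 3.3, Prop.~3.19) $m_r-1$ times. Applied against the $i$-th node-section factor, this produces the main term, in which $\Gamma\sbp{m_r}[s_r]$ is transferred inside the bracket, together with the localized correction $F(n_{Q,i}, j_{Q,i}, \theta_{Q,i}, m_r, s_r)$, which \emph{replaces} the factor $Q^{n_{Q,i}}_{j_{Q,i}}(\theta_{Q,i})$ by a sum of node scrolls supported at the same node $\theta_{Q,i}$. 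Summing over $i$ records the choice of which node-section factor absorbs the block and gives the $j=r$ part of the claimed expansion, with $\Gamma\sbp{m_r}[s_r]$ deleted from the surviving bracket.

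The heart of the matter is the \emph{first-order closure} of the corrections, which is what collapses the a priori multilinear expansion into the single sum over pairs $(i,j)$. The mechanism I would rely on is that each correction converts a node-section factor into a node-scroll factor, and node scrolls absorb every subsequent block with no correction, by the absorption identity above. Consequently, once a block has been absorbed at a node $\theta_{Q,i}$, that factor ceases to interact, and each block — being a single punctual cluster — can be routed to at most one node at a time. The surviving blocks then reassemble, by commutativity of $\star$ in $T_0$, into $\Gamma\sbp{m_1}[s_1]\star\dots\widehat{\Gamma\sbp{m_j}[s_j]}\dots\Gamma\sbp{m_r}$, and the coefficients are precisely those packaged into $F(n,j,\theta,m,s)$ through the rational numbers $r(n,j)^k_\l$.

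The main obstacle I anticipate is exactly this closure step in the presence of several distinct nodes: one must verify that the localizations at the distinct nodes $\theta_{Q,i}$ are mutually independent, so that no compound correction survives and the total reduction is exactly the displayed first-order sum. Concretely, this requires checking that punctual transfer (Prop.~3.19 of \cite{internodal}) applies verbatim to a class that already carries several node-scroll and node-section factors — i.e. that growing the incoming block at one node does not see the structure at the others — and that the induction on $r$ introduces no cross-terms between different blocks. Once this independence is secured, the sum over $(i,j)$ and the deletion of the absorbed block follow formally.
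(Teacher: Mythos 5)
Your overall strategy --- induct on $r$, absorb each incoming block via the single-factor transfer of Lemma \ref{r-numbers-lem}, and use the clean absorption identity for the $F$-factors --- is exactly the argument the paper intends; the paper's entire proof is the sentence ``the same argument shows,'' referring back to the proof of Lemma \ref{r-numbers-lem}, so in approach you and the paper agree, and you have in fact supplied more structure than the paper does.

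However, the step you yourself single out as the main obstacle --- the ``first-order closure'' --- is not closed by the mechanism you propose, and this is a genuine gap. Your argument rules out two kinds of compound correction: a single block being routed to two nodes at once, and a single node absorbing two blocks (since after absorbing one it has become a node scroll and is inert). It does not rule out the third kind: block $j$ absorbed at node $\theta_{Q,i}$ \emph{and} a different block $j'$ absorbed at a \emph{different} node $\theta_{Q,i'}$, each conversion consuming its own $Q$-factor. Concretely, run your induction on $Q^{n_1}_{j_1}(\theta_1)Q^{n_2}_{j_2}(\theta_2)\star\Gamma\sbp{1}[s_1]\star\Gamma\sbp{1}[s_2]$: the first application of \eqref{star-gamma-1} produces, among other terms, $\theta_1^*(s_1)\,F^{n_1+1}_{j_1}(\theta_1)Q^{n_2}_{j_2}(\theta_2)$, and applying \eqref{star-gamma-1} once more to \emph{this} term produces $\theta_1^*(s_1)\theta_2^*(s_2)\,F^{n_1+1}_{j_1}(\theta_1)F^{n_2+1}_{j_2}(\theta_2)$, a second-order cross-term with no counterpart in the displayed formula and no visible reason to vanish (take $s_1=s_2=1$). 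The ``mutual independence of localizations at distinct nodes,'' which you defer to a verbatim application of punctual transfer, is precisely the assertion that such cross-terms cancel or are already accounted for in the definition of the bracket classes $F^{n_F}_{j_F}(\theta_F)Q^{n_Q}_{j_Q}(\theta_Q)[\cdot]$ as closures of smooth-part loci; nothing in your write-up establishes this. Note also that your inductive step tacitly needs a transfer formula for classes of the form $FQ[\gamma]\star\Gamma\sbp{m}[s]$ with $\gamma$ already inside the bracket, which is not literally what Lemma \ref{r-numbers-lem} provides; stating and proving that intermediate identity is exactly where the cross-term question has to be confronted, and until it is, the collapse of the expansion to the single sum over pairs $(i,j)$ is unjustified.
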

Because $F$ classes are represented by $\P^1$-bundles, they automatically have vanishing integrals, so a nice simple consequence of Proporsition \ref{star-[]-eq} is
\begin{cor}\label{star-int} We have
\eqspl{}{
&\int F^{n_F}_{j_F}(\theta_F)Q^{n_Q}_{j_Q}(\theta_Q)\star\Gamma\sbp{m_1}[s_1]\star...\star\Gamma\sbp{m_r}[s_r]=\\
&\ \
\int F^{n_F}_{j_F}(\theta_F)Q^{n_Q}_{j_Q}(\theta_Q)[\Gamma\sbp{m_1}[s_1]\star...\star\Gamma\sbp{m_r}[s_r]].\qed 
}
\end{cor}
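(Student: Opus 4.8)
The plan is to read the statement off Proposition \ref{star-[]-eq}, whose right-hand side already displays the bracketed term appearing in the corollary together with an explicit sum of error terms. Applying $\int$ to both sides of Proposition \ref{star-[]-eq}, the corollary becomes equivalent to the single assertion that each error term
\[
F^{n_F}_{j_F}(\theta_F)\,F(n_{Q,i}, j_{Q,i}, \theta_{Q,i},m_j,s_j)\,Q^{n_Q\setminus n_{Q,i}}_{j_{Q}\setminus j_{Q,i}}(\theta_Q\setminus \theta_{Q,i})[\cdots]
\]
integrates to zero. So the whole proof reduces to understanding these correction terms.

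First I would expand the factor $F(n_{Q,i}, j_{Q,i}, \theta_{Q,i},m_j,s_j)$ by its definition \eqref{f(n,j)-eq}, which writes it as the pulled-back coefficient $\theta_x^*(s_j|_{A_\theta})$ times a $\Q$-linear combination of the node scroll classes $F^{n_{Q,i}+m_j}_k(\theta_{Q,i})$. Substituting this in, each error term becomes a linear combination, with coefficients pulled back from the relevant boundary base via $\theta_x^*$, of iterated node classes of the form
\[
F^{n_F}_{j_F}(\theta_F)\,F^{n_{Q,i}+m_j}_k(\theta_{Q,i})\,Q^{n_Q\setminus n_{Q,i}}_{j_{Q}\setminus j_{Q,i}}(\theta_Q\setminus \theta_{Q,i})[\cdots].
\]
The crucial point is that every such monomial now carries at least one node scroll factor, namely $F^{n_{Q,i}+m_j}_k(\theta_{Q,i})$, even in the case $\theta_F=\emptyset$ where the original expression had no $F$ factor at all.

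It then remains to invoke the vanishing of the integral of any iterated node class containing an $F$ factor. This is precisely the $\P^1$-bundle property recorded just before the corollary: such a class is represented by (an iterated) $\P^1$-bundle over a base, while the surviving factors $Q^{\cdots}_{\cdots}$, the bracketed $\Gamma$-monomial, and the multiplier $\theta_x^*(s_j|_{A_\theta})$ are all pulled back from that base and so contribute nothing in the fibre direction; pushing forward along the projection therefore kills the class. Summing over $i,j$, the entire error sum integrates to zero, and the two integrals in the corollary coincide.

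The one thing that genuinely needs care, and which I would treat as the main obstacle, is verifying that the multiplier $\theta_x^*(s_j|_{A_\theta})$ together with the retained $Q$- and $\Gamma$-factors really are pulled back from the base of the $\P^1$-bundle cut out by $F^{n_{Q,i}+m_j}_k(\theta_{Q,i})$, so that no fibre-direction class sneaks in to make the pushforward nonzero; once this compatibility is confirmed the vanishing is automatic. Note finally that when $b_F\geq 1$ both sides of the corollary already carry an $F$ factor and hence vanish individually, so the content of the statement is concentrated in the case $\theta_F=\emptyset$, where the reduction above removes exactly the $F$-bearing correction terms.
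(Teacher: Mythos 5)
Your proposal is correct and is essentially the paper's own argument: the corollary is derived from Proposition \ref{star-[]-eq} by observing that every correction term, once $F(n_{Q,i},j_{Q,i},\theta_{Q,i},m_j,s_j)$ is expanded via \eqref{f(n,j)-eq}, carries a node scroll factor $F^{n+m}_k(\theta)$, and such classes are $\P^1$-bundles with vanishing integrals. Your additional care about the remaining factors being pulled back from the base is a reasonable elaboration of a point the paper leaves implicit.
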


%
Note that the tautological module $T$ splits naturally as
\[T=\bigoplus\limits_{\theta.}T_{\theta.}\]
where the sum is over all vectors of distinct nodes and $T_{\theta.}$ consists of the classes that come from
the $\theta.$ boundary via a node scroll/section construction (though $T_{\theta.}$ is independent of the
ordering of $\theta.$, it is convenient to specify the ordering).
Thus
\eqspl{}{
T_{\theta.}=\bigoplus\limits_{\substack{\theta.=\theta_F\coprod \theta_Q,\\
 n_F,n_Q, j_F, j_Q}}F^{n_F}_{j_F}(\theta_F)Q^{n_Q}_{j_Q
}(\theta_Q)T_{A^{\theta.}}(X^{\theta.}/B(\theta.))
}
where $X^{\theta.}/B(\theta.)$ is the desingularized
boundary family corresponding to $\theta.$, endowed
with the node-preimage sections, and $A^{\theta.}$ is a coefficient ring on $X^{\theta.}/B(\theta.)$ as above.
\subsection{Standard model}\label{standard-model-sec}
We describe a standard model, actually just a notation change,
 for the tautological module $T$.
 This will be a free module $\hat T$ over a power series ring $\hat T_0$,
 in which $F^*_*(*), Q^*_*(*)$ and $\Gamma\sbp{*}[*]$ become variables  or formal symbols.
This will enable us to express the structure of $T$ in terms of standard operations such
 as differential operators.\par
 For each $n\geq 1$ let $t_n$ be a formal variable, let $t_0=1$,  and set
 \[ A\langle\infty\rangle=\bigoplus\limits_{n=0}^\infty At_n, \ainfty_{(\theta.)}=\bigoplus\limits_{n=0}^\infty A\sbp{\theta.}t_n\]
 as $A_B$ or $A_{B(\theta.)}$-module, respectively. Then we have an $A_B$-algebra
 \[\hat T_0=A_B[\ainfty].\]
 We think of generators $\alpha t_n\in \ainfty$ as corresponding to $\Gamma\sbp{n}[\alpha]$.and assign them weight $n$. Likewise,
 \[ \hat T_{0,(\theta.)}=A_{B(\theta.)}[\ainfty\sbp{\theta.}].\]
 We set
 \[\hat T_{0,*}=\bigoplus\limits_{(\theta.)}T_{0,(\theta.)}, \hat T_{0,i}=\bigoplus\limits_{|(\theta.)|=i}T_{0,(\theta.)}.\]
 For each node $\theta$, we designate formal  variables $\phi^n_j(\theta), \chi^n_j(\theta)$ corresponding
 to the node classes $F^n_j(\theta), Q^n_j(\theta)$.
 Now let $\theta_\phi, \theta_\chi$ be disjoint collections of distinct nodes, and let $n_\phi, j_\phi, n_\chi, j_\chi$
 be correspondingly-indexed vectors of natural numbers. Then set
 \[\phi^{n_\phi}_{j_{\phi}}(\theta_\phi)\chi^{n_\chi}_{j_\chi}(\theta_\chi)=\prod \limits_{(n_\phi,j_\phi, \theta_\phi)}
 \phi^n_j(\theta)\prod\limits_{
 (n_\chi, j_\chi, \theta_\chi)}\chi^n_j(\theta),\]
 \[ \hat T=\bigoplus \phi^{n_\phi}_{j_{\phi}}(\theta_\phi)\chi^{n_\chi}_{j_\chi}(\theta_\chi) \hat T_{0, \theta_\phi\coprod\theta_\chi}
 \]
 Thus, $\hat T$ is generated by symbols of the form \[\phi^{n_\phi}_{j_{\phi}}(\theta_\phi)\chi^{n_\chi}_{j_\chi}(\theta_\chi)\prod (t_{n_i}\alpha_i), \alpha_i\in A_{\theta_\phi\coprod\theta_\chi}\]
 and is a direct sum of $A_{(\theta.)}$ modules for the various collections $(\theta.)$ of distinct nodes.
 Moreover $\hat T$ is a $\hat T_{0,*}$-module.\par
 Note the map
 \eqspl{}{h:\hat T&\to T\\
 h(\phi^{n_\phi}_{j_{\phi}}(\theta_\phi)\chi^{n_\chi}_{j_\chi}(\theta_\chi)\prod (t_{n_i}\alpha_i))&=
F^{n_\phi}_{j_{\phi}}(\theta_\phi)Q^{n_\chi}_{j_\chi}(\theta_\chi)\star\prod_\star \Gamma\sbp{n_i}[\alpha_i]
}
$h$ is a bijection under which the $T_{0,*}$-module structure corresponds to $\star$ multiplication.\par
\begin{rem} \label{derivations}Note that for any $A_B$-linear map $\psi:A\to A$, there is a derivation $\psi t_n\del/\del t_n$ of $T$ defined by
\eqspl{derivation}{
\psi t_n\del/\del t_n (t_m\alpha)&=\begin{cases} \psi(\alpha), &m=n,\\ 0, &m\neq n;
\end{cases}
\\
\psi t_n\del/\del t_n (\phi^*_*(*)||\chi^*_*(*))&=0.
}
\end{rem}
Similarly, if $\psi:A\to A_{B_1}$ is an $A_B$-linear map, we can define a derivation
\eqspl{}{
\psi\phi^n_j(\theta)\del/\del t_n:\hat T\to\hat T,\\
\psi\phi^n_j(\theta)\del/\del t_n(t_m\alpha)=\begin{cases} \psi(\alpha)\phi^n_j(\theta), &m=n,\\ 0, &m\neq n;
\end{cases}
\\
\psi\phi^n_j(\theta) \del/\del t_n (\phi^*_*(*)||\chi^*_*(*))&=0.
}
\begin{rem}
Though not critical for our purposes, $\hat T$ can be made into a commutative associative ring under the proviso that
$\phi|\chi$ monomials must involve only distinct nodes $\theta$: i.e.
\[(\phi|\chi)^*_*(\theta)(\phi|\chi^*_*)(\theta)=0;\]
otherwise (i.e. where distinct $\theta$s are involved) $\phi$s and $\chi$s multiply formally.
\end{rem}

%

 \subsection{$\Gamma$ action}\label{gamma-action-sec}
For enumerative purposes, a crucial feature of $T$ is the
weight-graded action by the discriminant $\Gamma$. The nonclassical (boundary) part of the action is described
by the following rules.
\eqspl{}{
(\Gamma.{\prod}^\star \Gamma\sbp{n_i}[\alpha_i])_\theta=\sum\limits_{i}\sum\limits_{ 0<j<n_i}\frac{j(n_i-j)n_i}{2}F^{n_i}_j(\theta)
[{\prod\limits_{i'\neq i}}^\star \Gamma\sbp{n_{i'}}[\alpha_{i'} ]]
}
\eqspl{}{
-\Gamma.(F^n_j(\theta)[\gamma])=Q^n_j(\theta)[\gamma]+F^n_j[e^n_{j+1}.\gamma], \gamma\in T_{A^\theta}(X^\theta/B(\theta))\\
e^n_j(\theta)=-\Gamma_{X^\theta/B(\theta)}-(n-j+1)i(\theta_x)-ji(\theta_y)+\binom{n-j+1}{2}\psi_x(\theta)+\binom{j}{2}\psi_y(\theta)
}
\eqspl{}{
-\Gamma.Q^n_j(\theta)[\gamma]=Q^n_j[e^n_j(\theta).\gamma].
} Here $i(\theta_{x|y})$ refers to interior multiplication (see \S \ref{interior-multiplication-sec}).\par
The classical or interior part of the action of $\Gamma$ on $T_0$ is  described by
\eqspl{Gamma-0}{
\Gamma_0({\prod} ^\star\Gamma_{(n_j)}[\alpha_j])=\sum\limits_{j<j'}\Gamma_{(n_j+n_{j'})}[\alpha_j.\alpha_{j'}]\prod\limits_{k\neq j,j'}
\Gamma_{(n_k)}[\alpha_k]-\sum \binom{n_j}{2}\Gamma_{n_j}[\omega\alpha_j]
}

Note that $\Gamma_0$ has the nature of a second-order differential operator, in the following sense.
let $F$ be $\star$- polynomial in the $\Gamma_{(n)}[\alpha]$ with coefficients in $A_B$.  Let $\hat F=h\inv(F)\in \hat T$,
i.e. $\hat F$ is
the result of plugging in $t_{n}\alpha$ for each $\Gamma\sbp{n}[\alpha]$
 (and replacing $\star$ product by ordinary product). 
For $\alpha\in A$,  let $\alpha\del/\del t_n$ be the unique $A_B$-derivation on $\hat T_0$ such that
 \eqsp{
 \alpha\del/\del t_n(\alpha' t_{n'})=\begin{cases} \alpha\alpha', n'=n\\ 0, n'\neq n.
 \end{cases}
 } and of course $\del/\del t_n=1_A\del/\del t_n$.
 Then
\eqspl{}{
\Gamma_0F&=h(\hat \Gamma_0 \hat F), \mathrm{\ \ where}\\
\hat \Gamma_0:&=\sum\limits_{n\leq n'} nn't_{n+n'}\frac{\del^2}{\del t_{n}\del t_{n'}}
-\sum\limits_{n}\binom{n}{2}t_{n}\omega\frac{\del}{\del t_{n}}.
}
For example,
\[\hat\Gamma_0((t_n\alpha)( t_{n'}\alpha'))=
nn't_{n+n'}(\alpha.\alpha')-\binom{n}{2}t_n(\omega.\alpha)(t_{n'}\alpha')-\binom{n'}{2}(t_n\alpha)t_{n'}(\omega.\alpha'), n\neq n'.\]
This will be amplified below.\par
For later reference, we note the relation between $\Gamma_0$ on $T_0(X/B)$, as given by \eqref{Gamma-0}, and the corresponding operator on
$T_0(X^\theta/B(\theta))$ for a boundary family $X^\theta/B(\theta)$. The only difference is that $\omega=\omega_{X/B}$
is replaced by $\omega_{X^\theta/B(\theta)}=\omega(-\theta_x-\theta_y)$, where $\theta_x, \theta_y$
are the node preimage sections. Consequently, if we let $i\spr 2.$ be the derivation with respect to $\star$ product
 defined by
\eqspl{}{
{i}\spr 2.(\sigma)\Gamma\sbp{n}[\alpha]=\binom{n}{2}\Gamma\sbp{n}[\sigma.\alpha].
}
then we have
\eqspl{Gamma-0-comparison}{ \Gamma_{X^\theta/B(\theta), 0}= \Gamma_{0,X/B}|_{X^\theta}+ i\spr 2.(\theta_x+\theta_y)
}

\subsection{Interior multiplication}\label{interior-multiplication-sec}
Given any class $\alpha\in A$, there is an interior multiplication action $i(\alpha)$ on
the tautological module $T$: this is determined by the following conditions (where we
recall that a node $\theta$ is viewed as a map $B(\theta)\to X$ and yields
a pullback $\theta^*:A\to A_{B(\theta)}$):\begin{enumerate}
\item $i(\alpha)$ is a derivation with respect to $\star$ product;
\item $i(\alpha)\Gamma_{(n)}[\beta]=n\Gamma_{(n)}[\alpha.\beta]$;
\item $i(\alpha)F^n_j(\theta)[\beta]=F^n_j(\theta)[i(\alpha)\beta]+(\theta^*(\alpha))F^n_j(\theta)[\beta]$\item
$i(\alpha)Q^n_j(\theta)[\beta]=Q^n_j(\theta)[i(\alpha)\beta]+(\theta^*(\alpha))Q^n_j(\theta)[\beta]$.
\end{enumerate}
In applications, $\alpha$ will usually be a section (hence disjoint from
the node $\theta$),
so the second summand in the last two formulas it trivial. Therefore in such cases $i(\alpha)$ corresponds in
the model $\hat T$ to the operator \eqspl{delta}{
\delta(\alpha):=\sum \limits_{n}nt_{n}\alpha\del/\del t_{n}.
}
Similarly, the operator $i\spr 2.(\alpha)$ defined above corresponds to the derivation
\eqspl{delta-2}{
\delta\spr 2.(\alpha)=\sum\limits_{n}\binom{n}{2}t_{n}\alpha\del/\del t_{n}.
}
\subsection{$S$- transformation}\label{s-transformation-sec}
We seek a transformation on the tautological module taking $Q^n_j[\alpha]$ to $Q^n_j\star\alpha$. To this end, define rational numbers $r(n,j)^k_\l$
as in \eqref{f(n,j)-eq} (see Remark \ref{r-numbers-remark}).
Then set, as in  \eqref{derivation}
\eqspl{phi(n,j)-eq}{
\phi(n,j,\theta,m)=\sum\limits_k r(n,j)^k_{n+m}\phi^{n+m}_k(\theta)\theta^*
}
\eqspl{}{
\hat S=\sum \phi(n,j,\theta, m)\frac{\del^2}{\del \chi^n_j(\theta)\del t_{m}}
}
Then Proposition \ref{star-[]-eq} shows that $\hat S$ corresponds to an operator $S$ on $T$ such that
\[Q^n_j(\theta)[ \Gamma\sbp{m}[s]]=Q^n_j(\theta)\star[ \Gamma\sbp{m}[s]]-SQ^n_j(\theta)\star \Gamma\sbp{m}[s]\]
hence more generally
\eqspl{}{
F^{n_F}_{j_F}(\theta_F)Q^{n_Q}_{j_Q}(\theta_Q)[\prod_\star \Gamma\sbp{m.}[s.]]
=(I-S)\left(F^{n_F}_{j_F}(\theta_F)Q^{n_Q}_{j_Q}(\theta_Q)\star\prod_\star \Gamma\sbp{m.}[s.]\right).
}
Note that $\hat S^{b+1}=0$ where $b=\dim(B)$. Consequently,
\eqspl{I-S-inverse}{
(I-S)\inv=I+S+...+S^b.
}
\section{Evolution equation}
To introduce our evolution equation, we need some notation. First recall the 
corresponding to $\alpha\in A$ ( see \eqref{delta}, \eqref{delta-2}):
\eqspl{}{
\delta(\alpha)=\sum\limits_{n'}nt_{n}\alpha\frac{\del}{\del t_{n}}\\
\delta\spr 2.(\alpha)=\sum\limits_{n}\binom{n}{2}t_{n}\alpha\frac{\del}{\del t_{n}}.
}
 Then set
\eqspl{}{
\delta^n_j(\theta)=-(n-j+1)\delta(\theta_x)-j\delta(\theta_y)+\binom{n-j+1}{2}\psi_x(\theta)+\binom{j}{2}\psi_y(\theta)
} where the $\psi$ terms refer to the appropriate multiplication
operators. This is a first-order differential operator.\par
We will need to express the discriminant operator in terms of $\hat T$ with its $\hat T_0$-module structure, which will
involve rewriting terms like $Q^n_j(\theta)[\Gamma\sbp{n}]$ in terms of $Q^n_j(\theta)\star\Gamma\sbp{n}$. To this end, let
 $\hat \Gamma$ be the operator on $\hat T$ corresponding to $\Gamma$. It is
 $\hat\Gamma$ whose powers we wish to compute, as this will yields
 powers of $\Gamma$. The idea is to achieve that via a change of variable. Thus set, using
the notation of \S\ref{s-transformation-sec},
\eqspl{}{
\tilde \Gamma=(I-S)\inv\hat \Gamma(I-S)
.}
Then via $\hat\Gamma^k=(I-S){\tilde\Gamma}^k (I-S)\inv$, it suffices to compute powers of $\tilde\Gamma$. But
$\tilde \Gamma$ is a relatively 'elementary': specifically, a second-order differential operator.  
In  the above notations, we have, by a direct computation,
\eqspl{gamma-hat-eq}{
\tilde \Gamma=&\hat\Gamma_0+\sum \frac{j(n-j)n}{2}\theta_x^*\phi^n_j(\theta)\frac{\del}{\del t_{n}}-\sum \chi^n_j(\theta) \frac{\del}{\del \phi^n_j(\theta)}\\
& -\sum \phi^n_j(\theta)(\delta^n_{j+1}(\theta)- \delta\spr 2.(\theta_x+\theta_y))\frac{\del}{\del\phi^n_{j}(\theta)}+\chi^n_j(\theta)(\delta^n_j(\theta)-\delta\spr 2.(\theta_x+\theta_y))\frac{\del}{\del\chi^n_j(\theta)}
} where $\theta_x^*\phi^n_j(\theta)\frac{\del}{\del t_{n}}$ is as in Remark \ref{derivations}.
Here the $\delta\spr 2.(\theta_x+\theta_y)$ term comes from the difference between $\omega_{X/B}$
and $\omega_{X^\theta/B(\theta)}$.
Notice that because $S$ does not involve the $t$ variables, $\hat\Gamma_0$ coincides with the 'pure- $t$'
or classical portion of $\tilde\Gamma$.\par
Now we might consider the generating function $\exp(\gamma\tilde\Gamma)$ which encodes information about the powers
of the discriminant operator $\Gamma$ (weight unspecified).
%
%
%
As discussed in the Introduction, this is not sufficient for enumerative applications, which require monomials involving
discriminants of different weights and external multiplications. Fortunately the extension is not difficult to obtain.

To this end  let $\alpha_1,...,\alpha_r\in A$ be a set of homogeneous elements.
The results of \cite{geonodal} and \cite{internodal} show that Chern numbers of tautological
bundles $\Lambda_m(L)$, for a line bundle $L$ on $X$, 
on the flag-Hilbert schemes $W^m(X/B)$ of nodal curve families $X/B$  are given by linear combinations of monomials of the form
(read left to right)
\eqspl{monomials}{M=(\star\Gamma\sbp{1}[\alpha_1])(\star\Gamma\sbp{1}[\alpha_2])\Gamma^{k_2}...
(\star \Gamma\sbp{1}[\alpha_r])\Gamma^{k_r}
}
where $\alpha_i=L^{n_i}$.
Accordingly, we define, extending the above,
\eqspl{}{
G=\exp(\gamma\Gamma)\exp_\star(\sum \mu_i\Gamma\sbp{1}[\alpha_i])
\in T[[\gamma, \mu_1,...,\mu_r]],
} let
\eqspl{hat-G-eq}{
\hat G=\exp(\gamma\hat\Gamma)\exp(\sum\mu_i \alpha_i t_1)\in \hat T[[\gamma, \mu_1,...,\mu_r]]
}
be the corresponding element, and
\eqspl{tilde-G-eq}{
\tilde G=(I-S)\inv\hat G(I-S)
} (see \eqref{I-S-inverse}). Note that the first exponential in \eqref{hat-G-eq} refers to composition of operators
while the second refers to product in $\hat T_0$, which corresponds to $\star$ product.
We will use integral for an element of $\that$ to denote the integral of the corresponding element of $T$.

\begin{thm}\label{evo-thm}
The following differential equations hold:
\eqspl{evolution}{
&\del\tilde G/\del \gamma=\hat \Gamma_0\tilde G+\sum_{\theta, n,j} \frac{j(n-j)n}{2}\theta_x^*\phi^n_j(\theta)\del\tilde G/\del t_{n}
-\sum_{\theta, n,j} \chi^n_j(\theta)\del\tilde G/\del \phi^n_j(\theta)\\
& -\sum_{\theta, n,j} \phi^n_j(\theta)(\delta^n_{j+1}(\theta)-\delta\spr 2.(\theta_x+\theta_y))\del\tilde G/\del\phi^n_{j}(\theta)+\chi^n_j(\theta)(\delta^n_j(\theta)-\delta\spr 2.(\theta_x+\theta_y))\del\tilde G/\del\chi^n_j(\theta)
}

\eqspl{evolution-bis}{
\del\tilde G/\del \mu_i=t_{1}\alpha_i\tilde G+\sum_{\theta, n,j}\theta^*(\alpha_i)\phi^{n+1}_j(\theta)\del\tilde G/\del \chi^n_j(\theta).
}
Moreover,
\eqspl{integral-eq}{
\int\phi^{n_\phi}_{j_{\phi}}(\theta_\phi)\chi^{n_\chi}_{j_\chi}(\theta_\chi)\prod t_{n_i}\alpha_i=
\begin{cases} 0, n_\phi\neq\emptyset\\
\prod\int_X\alpha_i, n_\phi=\emptyset.
\end{cases}
}

\end{thm}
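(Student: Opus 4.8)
The plan is to exploit the conjugation $\tilde\Gamma=(I-S)\inv\hat\Gamma(I-S)$ together with the nilpotence \eqref{I-S-inverse} to put $\tilde G$ into a transparent closed form, and then read off the three assertions by differentiating under the exponential. First I would observe that $S$ involves only $\partial/\partial\chi^n_j$ and $\partial/\partial t_m$, so it annihilates the pure-$t$ factor $\exp(\sum_i\mu_i\alpha_i t_1)$; combined with $\hat\Gamma^k=(I-S)\tilde\Gamma^k(I-S)\inv$ (immediate from the definition of $\tilde\Gamma$) this gives $\exp(\gamma\hat\Gamma)=(I-S)\exp(\gamma\tilde\Gamma)(I-S)\inv$ and hence, from \eqref{hat-G-eq}, \eqref{tilde-G-eq},
\[
\tilde G=(I-S)\inv\hat G(I-S)=\exp(\gamma\tilde\Gamma)\exp\Bigl(\textstyle\sum_i\mu_i\alpha_i t_1\Bigr).
\]
With this in hand, \eqref{evolution} is purely formal: $\partial\tilde G/\partial\gamma=\tilde\Gamma\tilde G$, and substituting the explicit second-order form \eqref{gamma-hat-eq} for $\tilde\Gamma$ reproduces the right-hand side term by term. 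So the whole weight of \eqref{evolution} sits in the earlier identity \eqref{gamma-hat-eq}.

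For \eqref{evolution-bis} I would differentiate the same closed form in $\mu_i$, obtaining $\partial\tilde G/\partial\mu_i=\exp(\gamma\tilde\Gamma)\bigl(\alpha_i t_1\cdot\exp(\sum\mu\alpha t_1)\bigr)$, and then identify the operator that reproduces this when applied to $\tilde G$. At the level of $T$, $\mu_i$-differentiation is $\star$-multiplication by $\Gamma\sbp{1}[\alpha_i]$ on the inner factor, so the task is to transport $\star$-multiplication by $\Gamma\sbp{1}[\alpha_i]$ through $h\circ(I-S)$. The naive contribution is $t_1\alpha_i\tilde G$; the correction is dictated by the transfer formula \eqref{star-gamma-1}, whose only nonclassical term is $Q^n_j(\theta)\star\Gamma\sbp{1}[\alpha]=Q^n_j(\theta)[\Gamma\sbp{1}[\alpha]]+\theta^*(\alpha)F^{n+1}_j(\theta)$. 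Using Remark \ref{r-numbers-remark} (so that the coefficients $r(n,j)^k_\l$ collapse at $\l=n+1$ to $r(n,j)^j_{n+1}=1$), this second summand is realized in $\hat T$ precisely by $\sum_{\theta,n,j}\theta^*(\alpha_i)\phi^{n+1}_j(\theta)\,\partial/\partial\chi^n_j(\theta)$, which converts each $\chi^n_j(\theta)$ into $\theta^*(\alpha_i)\phi^{n+1}_j(\theta)$. The substance of this step is commuting the multiplication operator $t_1\alpha_i$ back through $\exp(\gamma\tilde\Gamma)$ and checking that the transfer correction is exactly what the conjugation by $I-S$ produces; Proposition \ref{star-[]-eq} is the natural vehicle, since it already records how $F^{n_F}_{j_F}Q^{n_Q}_{j_Q}\star\Gamma\sbp{m}[s]$ decomposes.

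Finally, for \eqref{integral-eq} I would invoke that $F$-classes are represented by $\mathbb P^1$-bundles over their base and so have vanishing fibre integrals (this is the input behind Corollary \ref{star-int}): any monomial carrying a nonempty $\phi$-part, i.e.\ $n_\phi\neq\emptyset$, therefore integrates to $0$. When $n_\phi=\emptyset$ the class is a $\star$-product of node sections and diagonals, and pushing forward along the Hilbert-to-$X$ projections reduces the integral to the classical Pontryagin structure of $T_0$, where each diagonal $\Gamma\sbp{n_i}[\alpha_i]$ contributes $\int_X\alpha_i$ and the node-section factors contribute trivially, giving $\prod\int_X\alpha_i$.

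I expect the main obstacle to be the ``direct computation'' behind \eqref{gamma-hat-eq}, namely verifying that conjugating $\hat\Gamma$ by $I-S$ clears all the $Q^n_j[\Gamma\sbp{m}]$ mixing terms and leaves exactly the stated second-order operator built from $\hat\Gamma_0$, the $\delta^n_j(\theta)$, and $\delta\spr 2.(\theta_x+\theta_y)$. Relatedly, the delicate point in \eqref{evolution-bis} is the noncommutation of $\exp(\gamma\tilde\Gamma)$ with $t_1\alpha_i$: one must confirm that the $\Gamma$-action rules \eqref{Gamma-0} and the first-order $\delta^n_j$-terms interact with the transfer coefficients $r(n,j)^k_\l$ of Lemma \ref{r-numbers-lem} so as to leave precisely the single correction term displayed, with no residual classical contribution.
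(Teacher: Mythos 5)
Your treatment of \eqref{evolution} and \eqref{integral-eq} matches the paper's. For \eqref{evolution} the paper likewise reduces everything to the explicit second-order form \eqref{gamma-hat-eq} of $\tilde\Gamma$ via $\partial\tilde G/\partial\gamma=\tilde\Gamma\tilde G$, and then spends the proof matching the individual terms of \eqref{gamma-hat-eq} against the $\Gamma$-action rules (interior/boundary action on polyblocks, the $F\mapsto Q$ term, the $\delta^n_j$ terms, and the $\delta\spr 2.(\theta_x+\theta_y)$ adjustment for $\omega_{X^\theta/B(\theta)}$); for \eqref{integral-eq} the paper uses exactly your two inputs, the $\P^1$-bundle vanishing for $\phi$-monomials and Corollary \ref{star-int} for the $\chi$-and-$t$ case.

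The divergence, and the genuine gap, is in \eqref{evolution-bis}. The paper does not obtain it by differentiating a closed form and then commuting: it reads $\partial/\partial\mu_i$ as the transfer operator $\star\Gamma\sbp{1}[\alpha_i]$ applied to $\tilde G$ (consistent with the left-to-right composition convention for the monomials \eqref{monomials}), so that \eqref{evolution-bis} is literally the Transfer Theorem -- multiplication by $t_1\alpha_i$ on the polyblock and $\phi$ part, plus the $Q^n_j(\theta)\mapsto\theta^*(\alpha_i)F^{n+1}_j(\theta)$ correction of \eqref{star-gamma-1}. Your route through $\tilde G=\exp(\gamma\tilde\Gamma)\exp(\sum_i\mu_i\alpha_it_1)$ instead requires the identity $\exp(\gamma\tilde\Gamma)\,t_1\alpha_i=\bigl(t_1\alpha_i+\sum_{\theta,n,j}\theta^*(\alpha_i)\phi^{n+1}_j(\theta)\partial/\partial\chi^n_j(\theta)\bigr)\exp(\gamma\tilde\Gamma)$ on the relevant elements, and this already fails at first order in $\gamma$: the commutator $[\hat\Gamma_0,\,t_1\alpha_i]$ is nonzero (it sends $t_1\alpha_j$ to $t_2\alpha_i\alpha_j$, reflecting $\Gamma(\Gamma\sbp{1}[\alpha_i]\star\Gamma\sbp{1}[\alpha_j])\supset\Gamma\sbp{2}[\alpha_i\alpha_j]$), whereas your proposed correction operator annihilates anything free of $\chi$-variables. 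So there is a residual classical contribution that the displayed equation cannot absorb; the ``delicate point'' you flag at the end is not a verification to be carried out but an actual obstruction to deriving \eqref{evolution-bis} from the closed form. The repair is to drop the closed form for this equation and argue as the paper does, applying Proposition \ref{star-[]-eq} (equivalently the Transfer Theorem) directly to an arbitrary $FQ$-monomial of $\tilde G$.
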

This Theorem, together with the obvious initial value $\tilde G(0,...,0)=1$ enables the computation
of $\tilde G$, hence of $G$, hence of monomials $M$ as in \eqref{monomials}.
\begin{proof}
To begin with, the first part of  relation \eqref{integral-eq} is essentially obvious , as $\phi$ variables correspond to $\P^1$-bundles of type $F$. The second part follows from Corollary \ref{star-int}, as
$\chi$ variables correspond to sections of type $Q$ of the $F$-bundles, and as far as integrals are concerned, $Q[\alpha]$ is equivalent
to $Q\star\alpha$.\par
Now the relation \eqref{evolution} encapsulates the computation of the $\Gamma$ operator as carried out in \cite{internodal}, \S 2. Schematically, applying $(-\Gamma)$ to a class of the form $F[y]$, $F=F^n_j(\theta)$,  yields the sum of
\par (i) the corresponding $Q[y]$ class;\par 
(ii) a class $F[dy]$ where $d$ is analogous to $\delta^n_{j+1}(\theta)$ above;\par 
(iii) the class $F[-\Gamma y]$.\par
Applying $-\Gamma$ to $Q[y]$ yields a sum of only the last two types (with $j$ in place of $j+1$).\par 
The first and second terms on the right of \eqref{evolution} correspond to the interior and boundary part of applying $\Gamma$ to polyblock diagonals and generally to the polyblock factor of an $FQ$- monomial as in
\eqref{fq-monomial}
(see \cite{internodal}, Thm. 2.23 ).
The third  term represents item (i) above for 
the action of $\Gamma$ on each $F$ factor.
In the final summation, the $\phi\delta$ term represents item (ii) above for each $F$, while the
$\chi\delta$ term represents the corresponding term for each $Q$
 (see \cite{internodal}, Theorem 2.24 and Remark 2.26 ).
The $\delta\spr 2.$ term are the result of '$\omega$ adjustment' as in \eqref{Gamma-0-comparison}, i.e writing
\[\omega_{X^\theta/B(\theta)}=\omega_{X/B}\otimes\O_{X^\theta}(-\theta_x-\theta_y).\]
Because different nodes $\theta$ are disjoint, no products of $\theta$-s appear.
\par
Equation \eqref{evolution-bis} is a consequence of the Transfer Theorem of \cite{internodal} (see Theorem 3.4 and display
(3.1.19)). The second term is a reflection of the $F^{n+1}_j(\theta)$ term in the transfer of $Q^n_j(\theta)$.

\end{proof}

\bibliographystyle{amsplain}
\bibliography{mybib}
\end{document}